\newtheorem{theorem}{Theorem}[section]
\newtheorem{corollary}[theorem]{Corollary}
\newtheorem{lemma}[theorem]{Lemma}
\newtheorem{proposition}[theorem]{Proposition}
\newtheorem{remark}[theorem]{Remark}
\numberwithin{equation}{section}
\def\R{{\mathbb R}}
\def\E{{{\mathbb E}\,}}
\def\P{{\mathbb P}}
\def\Z{{\mathbb Z}}
\def\square{{\vcenter{\vbox{\hrule height.3pt
        \hbox{\vrule width.3pt height5pt \kern5pt
           \vrule width.3pt}
        \hrule height.3pt}}}}
\def\tlint{{- \kern-0.85em \int \kern-0.2em}}
\def\dlint{{- \kern-1.05em \int \kern-0.4em}}
\def\bN {{\mathbb N}}
\def \eref#1{\hbox{(\ref{#1})}}
\def \eref#1{\hbox{(\ref{#1})}}
\newenvironment{proof}[1][Proof]{\noindent\textit{#1.} }{\hfill \rule{0.5em}{0.5em}}
\begin{document}

\title{Markov Chain Approximations to Singular Stable-like Processes}

\date{\empty }
\author{Fangjun Xu\thanks{F. Xu is supported in part by the Robert Adams Fund.}\\
Department of Mathematics \\
University of Kansas \\
Lawrence, Kansas, 66045 USA}

\maketitle

\begin{abstract}
\noindent We consider the Markov chain approximations for a class of singular stable-like processes. First we obtain properties of some Markov chains. Then we construct the approximating Markov chains and give a necessary condition for the weak convergence of these chains to the singular stable-like processes.\vskip.2cm

\noindent {\it Keywords}: Markov chain approximation, Weighted Poincar\'{e} inequality, Lower bound, Exit time. \vskip.2cm

\noindent {\it Subject Classification}: Primary 60B10, 60J27;
Secondary 60J75.

\end{abstract}

\section{Introduction}
A class of singular stable-like processes $X$ is considered in \cite{xu}. These processes $X$
correspond to the Dirichlet forms
\begin{equation} \label{form}
\left\{
\begin{array}{ll}
\mathcal{E}(f,f)=\int_{\R^d}\int_{\R^d}\big(f(y)-f(x)\big)^2J(x,y)\,m(dy)\,dx,\\
\\
\mathcal{F}=\big\{f\in L^2(\R^d):\mathcal{E}(f,f)<\infty\big\},
\end{array}\right.
\end{equation}
where $m(dy)$ is the measure on the union of coordinate axes $\cup^d_{i=1}\R_i$ with $\R_i$ being the i-th coordinate axis of $\R^d$ and $m$ restricted to each $\R_i$ being one-dimensional Lebesgue measure on $\R$. The jump kernel $J(x,y)$
satisfies
\[
J(x,y)=\left\{
\begin{array}{rl}
\frac{c(x,y)}{|x-y|^{1+\alpha}}, &~\text{if}~ y-x\in
\cup^d_{i=1}\R_i\backslash\{0\};\\
0, &~\text{otherwise},
\end{array}\right.
\]
where $c(x,y)=c(y,x)$ and $0<\kappa_1\leq c(x,y)\leq
\kappa_2<\infty$ for all $x$ and $y$ in $\R^d$.

In this paper, we consider the Markov chain approximations for the processes $X$ in \cite{xu}. In the last few years, Markov chain approximations for symmetric Markov
processes have received a lot of attention. Stroock and Zheng proved the Markov chain approximations to symmetric diffusions in \cite{stroock_zheng}. We refer to \cite{bass_kumagai2}, \cite{husseini_kassmann} and \cite{chen_kim_kumagai} for Markov chain approximations to general symmetric Markov processes and  \cite{deuschel_kumagai} for Markov chain approximations to non-symmetric diffusions. The Markov chain approximations to $X$ are not considered in the above references since the processes $X$ have singular jump kernels. It is natural to ask whether $X$ could be approximated by Markov chains. If so, under what conditions, such approximation holds. The main difficulty is to get the near diagonal lower bounds  in Proposition \ref{lower}. We use weighted Poincar\'{e} inequalities to obtain these lower bounds. This technique first appeared in \cite{stroock_saloff}, see also \cite{saloff_coste} and \cite{xu1}.

The paper is organized as follows. In Section 2 we introduce
notation and define Markov chains related to $X$. In Section 3, we first construct a sequence of Markov chains. Then we obtain heat kernel estimates, exit time estimates and the regularity for these chains. In Section 4 we show the Markov chain approximations for processes $X$. Throughout this paper, if not mentioned otherwise, the letter $c$ with or without a subscript denotes a positive finite constant whose exact value is unimportant and may change from line to line.

\medskip

\section{Preliminaries}

Let $C(\cdot,\cdot):\Z^d\times\Z^d\to [0,\infty)$ be the function satisfying 
\begin{itemize}
\item[(a)] $C(x,y)=C(y,x)$ for all $x,y\in\Z^d$;
\item[(b)] There exist positive constants
$\kappa_1$ and $\kappa_2$ such that
\[
\left\{
\begin{array}{cl}
\frac{\kappa_1}{|y-x|^{1+\alpha}}\leq C(x,y)\leq
\frac{\kappa_2}{|y-x|^{1+\alpha}}, &\qquad\text{if}~y-x\in\cup^d_{i=1}\Z_i\backslash\{0\},\\
C(x,y)=0, &\qquad\text{otherwise},
\end{array}\right.
\]
where $\Z_i=\Z e_i$ with $e_i$ being the i-th vector in $\R^d$.
\end{itemize}
For any $x$ and $y$ in $\Z$, $C(x,y)$ is called the conductance between $x$ and $y$. Set
\[
G_x:=\sum_{y\in\Z^d}C(x,y)=\sum_{z\in \cup^d_{i=1}\Z_i}C(x,x+z).
\]
We define a symmetric Markov chain $\widetilde{Y}$ on
$\Z^d$ by
\[
\P(\widetilde{Y}_1=y\,|\,\widetilde{Y}_0=x)=\frac{C(x,y)}{G_x},\quad
\text{for}\; x,y\in\Z^d.
\]
The Markov chain $\widetilde{Y}$ is discrete in time and in space.
We next introduce the continuous time version of $\widetilde{Y}$. Let
$Y$ be a process that waits at a point in $\Z^d$ for a length of
time that is exponential with parameter 1, then jumps according to
the jump probabilities of $\widetilde{Y}$. After that, the process
$Y$ waits at the new point for a length of time that is exponential
with parameter 1 and independent of what has gone before, and so on.
The process $Y$ defined above is the continuous time version of
$\widetilde{Y}$. The continuous time and continuous state process
closely related to both $\widetilde{Y}_n$ and $Y_t$ is the process
$X$ corresponding to the Dirichlet form $(\mathcal{E},\mathcal{F})$ in \eref{form}.

Let $q_Y(t,x,y)$ be the transition density of $Y$. Since the conductance function of $Y$ satisfies conditions (A1)-(A4) in \cite{xu1}, Proposition 2.2 in \cite{xu1} implies the following result. 
\begin{proposition} \label{bound1} For all $x$ and $y$ in $\Z^d$, there exists a positive constant $c_1$ such that
\[
q_Y(t,x,y)\leq c_1(t^{-d/\alpha}\wedge 1),\quad \text{for all}\; t>0.
\]
\end{proposition}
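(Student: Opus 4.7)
The plan is essentially to invoke Proposition 2.2 of \cite{xu1}, so the real work is to verify that hypotheses (A1)--(A4) of \cite{xu1} are satisfied by the conductance $C$ defined above. Symmetry $C(x,y)=C(y,x)$ is built into (a). The two-sided singular bound on $\cup_{i=1}^d \Z_i\setminus\{0\}$ and the vanishing of $C$ off the axes in (b) match the structural hypotheses of \cite{xu1}. It remains to note that the total conductance
\[
G_x = \sum_{z\in\cup_{i=1}^d \Z_i\setminus\{0\}} C(x,x+z)
\]
is uniformly comparable to $d\sum_{n\geq 1}n^{-1-\alpha}$, which is finite for $\alpha\in(0,2)$; this gives the uniform upper and lower control on $G_x$ needed for the jump and waiting rates.

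Once (A1)--(A4) are in hand, Proposition 2.2 of \cite{xu1} yields the stated bound directly. For transparency, the underlying strategy is the standard one: establish a Nash-type inequality
\[
\|f\|_2^{2+4\alpha/d} \leq c\,\mathcal{E}^Y(f,f)\,\|f\|_1^{4\alpha/d}
\]
for the Dirichlet form $\mathcal{E}^Y$ of $Y$, deduce the on-diagonal decay $q_Y(t,x,x)\leq c\,t^{-d/\alpha}$ for $t\in(0,1]$ by the usual Nash argument, and then use Cauchy--Schwarz on the Chapman--Kolmogorov identity,
\[
q_Y(t,x,y)^2 \leq q_Y(t,x,x)\, q_Y(t,y,y),
\]
to get the off-diagonal bound. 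For $t\geq 1$, $L^\infty$-contractivity of the semigroup applied to the bound at $t=1$ yields $q_Y(t,x,y)\leq c$, and combining the two regimes gives $c_1(t^{-d/\alpha}\wedge 1)$.

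The one genuinely subtle step, already carried out in \cite{xu1}, is the Nash inequality itself: because the jump kernel is supported only on the coordinate axes, the usual isotropic Sobolev/Nash arguments fail, and one must exploit the product-along-axes structure through weighted Poincar\'e inequalities, as flagged in the introduction. With that inequality available, the remainder of the outline above is routine, and no new work beyond checking (A1)--(A4) is needed here.
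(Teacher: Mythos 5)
Your proof is correct and takes essentially the same route as the paper, which likewise disposes of the proposition in one line by checking that the conductance satisfies conditions (A1)--(A4) of \cite{xu1} and invoking Proposition 2.2 there. (The expository Nash-inequality sketch you append is not needed for the argument; note only that the exponents there should read $\|f\|_2^{2+2\alpha/d}\leq c\,\mathcal{E}^Y(f,f)\,\|f\|_1^{2\alpha/d}$ to produce the decay rate $t^{-d/\alpha}$.)
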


\medskip

\section{Heat Kernel Estimates and Regularity}
In this section, we first define a sequence of Markov chains from $Y$. Then we obtain heat kernel estimates, exit time estimates and the regularity result for these chains.

For each $\rho\geq 1$, set $\mathcal{S}=\rho^{-1}\mathbb{Z}^d$. For each $x\in\mathcal{S}$ and $A\subset\mathcal{S}$, let $\mu^{\rho}_x=\rho^{-d}$
and $\mu^{\rho}(A)=\sum\limits_{y\in A}\mu^{\rho}_y$. Define the rescaled process $V$ as 
\[
V_t=\rho^{-1}Y_{\rho^{\alpha}t},\quad\text{for}\; t\geq 0.
\] 
We see that the Dirichlet form corresponding to $V$ is
\[
\left\{
\begin{array}{rl}
\mathcal{E}^{\rho}(f,f)= \sum\limits_{\mathcal{S}}\sum\limits_{\mathcal{S}}\big(f(y)-f(x)\big)^2C^{\rho}(x,y),\\
\mathcal{F}_{\rho}= \big\{f\in L^2(\mathcal{S},\mu^{\rho}):\;\mathcal{E}^{\rho}(f,f)<\infty\big\},
\end{array}\right.
\]
where $C^{\rho}(x,y)=\rho^{\alpha-d}C(\rho x,\rho y)$ for all $x,y\in\mathcal{S}$.

Write $p(t,x,y)$ for the transition density of $V$. Then
\begin{equation} \label{scale}
p(t,x,y)=\rho^{d}q_Y(\rho^{\alpha}t,\rho x,\rho y)
\end{equation}
for all $x,y\in\Z^d$ and $t>0$.
\begin{proposition} For all $\rho\geq 1$, there exists $c_1$ such that
\[
p(t,x,y)\leq c_1t^{-d/\alpha}.
\]
\end{proposition}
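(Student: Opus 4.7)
The plan is to combine the scaling identity \eref{scale} with the on-diagonal bound from Proposition \ref{bound1}. Since $Y$ lives on $\mathbb{Z}^d$ and has been rescaled by $\rho$ in space and $\rho^\alpha$ in time to produce $V$, the bound for $V$ should follow immediately by plugging into \eref{scale}.

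Concretely, the first step is to start from
\[
p(t,x,y)=\rho^{d}\,q_Y(\rho^{\alpha}t,\rho x,\rho y)
\]
and apply Proposition \ref{bound1} to the factor $q_Y(\rho^\alpha t,\rho x,\rho y)$. The proposition gives
$q_Y(s,x',y')\le c_1(s^{-d/\alpha}\wedge 1)\le c_1\,s^{-d/\alpha}$ for all $s>0$; we only need the $s^{-d/\alpha}$ half of the bound. Substituting $s=\rho^\alpha t$ yields
\[
p(t,x,y)\le c_1\,\rho^{d}(\rho^{\alpha}t)^{-d/\alpha}=c_1\,t^{-d/\alpha},
\]
which is exactly the desired inequality, with a constant $c_1$ independent of $\rho\ge 1$. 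The scale-invariance that makes this work is built into \eref{scale}: the on-diagonal polynomial decay rate $s^{-d/\alpha}$ of $q_Y$ is precisely compensated by the $\rho^d$ factor.

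There is essentially no obstacle here; the only small care is that if one wants a single bound valid for all $t>0$ (including very small $t$, where the chain $Y$ has no better estimate than $1$), one must use the $s^{-d/\alpha}$ branch rather than the $\wedge 1$ truncation. That works because for the rescaled chain the state-space measure $\mu^{\rho}$ has atoms of mass $\rho^{-d}$, so $p(t,x,y)$ is not naturally bounded by $1$ but rather by $\rho^{d}$, and indeed $c_1 t^{-d/\alpha}$ automatically dominates $\rho^d$ when $t\le \rho^{-\alpha}$. Hence no separate case analysis is needed, and the proof is a one-line application of \eref{scale} together with Proposition \ref{bound1}.
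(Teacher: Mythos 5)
Your proof is correct and is exactly the paper's argument: the paper's proof is the one-line remark that the claim follows from \eref{scale} and Proposition \ref{bound1}, and your computation $p(t,x,y)=\rho^{d}q_Y(\rho^{\alpha}t,\rho x,\rho y)\leq c_1\rho^{d}(\rho^{\alpha}t)^{-d/\alpha}=c_1t^{-d/\alpha}$ simply writes out that cancellation explicitly. The observation about why the $s^{-d/\alpha}$ branch (rather than the $\wedge 1$ truncation) is the right one to use is a sensible clarification but does not change the route.
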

\begin{proof} This follows from \eref{scale} and Proposition \ref{bound1}.
\end{proof}

\medskip

For each $\lambda\geq 1$, let $V^\lambda$ be the process $V$ with
jumps greater than $\lambda$ removed. Write $p^{\lambda}(t,x,y)$ for
the transition density of the truncated process $V^{\lambda}$. The argument in the proof of Lemma 2.5 in \cite{xu1} gives the following off-diagonal upper bound for $p^{\lambda}(t,x,y)$.
\begin{lemma} \label{bound2} For all $t>0$ and $x,y\in\mathcal{S}$, there exist $c_1$ and $c_2$
such that
\[
p^{\lambda}(t,x,y)\leq c_1t^{-d/\alpha}e^{c_2t-|x-y|/\lambda}.
\]
\end{lemma}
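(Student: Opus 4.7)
The plan is to adapt the Davies-type exponential perturbation method used to prove Lemma~2.5 in \cite{xu1} to the rescaled truncated chain $V^{\lambda}$, checking that all constants produced are independent of $\rho\geq 1$. For a fixed vector $\xi\in\R^d$ set $\psi_\xi(x)=\xi\cdot x$ on $\mathcal{S}$ and consider the tilted semigroup
\[
P^{\lambda,\xi}_tf:=e^{-\psi_\xi}P^{\lambda}_t(e^{\psi_\xi}f),
\]
whose kernel is $p^{\lambda,\xi}(t,x,y)=e^{\psi_\xi(x)-\psi_\xi(y)}p^{\lambda}(t,x,y)$ and whose associated quadratic form is the conjugation of the truncated Dirichlet form $\mathcal{E}^{\rho,\lambda}$ by $e^{\psi_\xi}$.

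First I would establish an $L^2\to L^2$ bound for $P^{\lambda,\xi}_t$. Differentiating $\|e^{\psi_\xi/2}P^\lambda_tf\|^2_{L^2(\mathcal{S},\mu^\rho)}$ in $t$ produces the conjugated form; the off-diagonal cross terms are controlled using $(e^u-1)^2\leq u^2 e^{2|u|}$ together with the cutoff $|y-x|\leq\lambda$, yielding
\[
\sup_x\sum_{y}\bigl(e^{\psi_\xi(y)-\psi_\xi(x)}-1\bigr)^2 C^{\rho,\lambda}(x,y)\;\leq\;c\,|\xi|^2\lambda^{2-\alpha}e^{|\xi|\lambda}=:\Lambda(\xi),
\]
since $C^{\rho}(x,y)\leq\kappa_2|x-y|^{-1-\alpha}$ and $\sum_{0<|z|\leq\lambda}|z|^{1-\alpha}$ over the lattice axes is of order $\lambda^{2-\alpha}$ uniformly in $\rho$. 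A Gronwall argument then gives $\|P^{\lambda,\xi}_t\|_{L^2\to L^2}\leq e^{\Lambda(\xi)t}$.

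Combining this tilted contractivity with the on-diagonal bound $p^\lambda(t,x,y)\leq p(t,x,y)\leq c_1 t^{-d/\alpha}$ from Proposition~3.1, via the Carlen--Kusuoka--Stroock duality (split $P^{\lambda,\xi}_t=P^{\lambda,\xi}_{t/2}P^{\lambda,\xi}_{t/2}$ and use that the $L^1\to L^2$ norm of the tilted semigroup is the square root of its $L^1\to L^\infty$ norm), one obtains
\[
p^{\lambda,\xi}(t,x,y)\leq c_1 t^{-d/\alpha}e^{c_2\Lambda(\xi)t}.
\]
Finally, choose $\xi=\lambda^{-1}(x-y)/|x-y|$, so that $\psi_\xi(x)-\psi_\xi(y)=|x-y|/\lambda$ and $|\xi|\lambda=1$, which forces $\Lambda(\xi)\leq c$ independently of $\rho$ and $\lambda$; undoing the tilt gives the stated bound.

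The principal obstacle is the second step: showing the tilted semigroup inherits an on-diagonal estimate matching that of the untilted chain up to the factor $e^{c_2\Lambda(\xi)t}$. Because the jump kernel is supported only on the coordinate axes, the usual Nash inequality underlying Proposition~3.1 is not classical and one must use the singular weighted Poincar\'e route from \cite{xu1}; the crucial point is that the scaling $y\mapsto \rho^{-1}y$, $t\mapsto\rho^{\alpha}t$ leaves these inequalities invariant, so that the constants $c_1,c_2$ in the final bound do not pick up $\rho$-dependence.
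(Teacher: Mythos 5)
Your proposal is essentially the paper's own proof: the paper disposes of this lemma by invoking ``the argument in the proof of Lemma 2.5 in \cite{xu1}'', which is precisely the Davies exponential-tilting scheme you spell out, and your check that $\Lambda(\xi)$ (hence $c_1,c_2$) is uniform in $\rho$ is the one substantive point that needs verifying for the rescaled chain. The only slip is the inequality $p^{\lambda}(t,x,y)\leq p(t,x,y)$, which is not true pointwise (deleting large jumps can increase the kernel); the correct on-diagonal input is $p^{\lambda}(t,x,y)\leq c\,t^{-d/\alpha}e^{ct}$ with $c\sim\lambda^{-\alpha}$, obtained either from Meyer's decomposition ($p^{\lambda}\leq e^{ct}p$ on the event of no removed jumps) or from the Nash inequality for the truncated form, and the extra factor $e^{ct}$ is harmlessly absorbed into $e^{c_2t}$.
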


For any set $A\subset\mathcal{S}$, define
\begin{equation*}
T_A(V)=\inf\big\{t\geq 0: V_t\notin A\big\}\quad\text{and}\quad
\tau_A(V)=\inf\big\{t\geq 0: V_t\in A\big\}.
\end{equation*}
The upper bound in Lemma \ref{bound2} implies the following exiting time
estimates for $V$, whose proof can be found in Proposition 3.4 of 
\cite{bass_kumagai1} and Proposition 4.1 of \cite{chen_kumagai}.
\begin{theorem} \label{exit1}
For $a>0$ and $0<b<1$, there exists $\gamma=\gamma(a,b)\in(0,1)$
such that for any $R\geq 1$ and $x\in\mathcal{S}$,
\[
\P^x\big(\tau_{(x,aR)}(V)<\gamma R^\alpha\big)\leq b.
\]
\end{theorem}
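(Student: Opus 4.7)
I would follow the truncation argument used in Proposition 3.4 of \cite{bass_kumagai1} and Proposition 4.1 of \cite{chen_kumagai}, with Lemma \ref{bound2} playing the role of the required heat-kernel upper bound. Fix $\lambda=aR$ and decompose
\[
\{\tau_{(x,aR)}(V)<\gamma R^\alpha\}\subset\{\tau_{(x,aR)}(V^\lambda)<\gamma R^\alpha\}\cup E_\lambda,
\]
where $E_\lambda$ is the event that $V$ makes a jump of size greater than $\lambda$ before time $\gamma R^\alpha$; the inclusion holds because $V$ and $V^\lambda$ can be coupled to agree up to the first such jump.

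For the event $E_\lambda$, the total rate of jumps of size greater than $\lambda$ from any site $y\in\mathcal{S}$ is bounded by
\[
\sum_{z\in\cup^d_{i=1}\rho^{-1}\Z_i,\ |z|>\lambda}C^\rho(y,y+z)\leq c\,\lambda^{-\alpha},
\]
using the upper bound on $C^\rho$ and summation of the one-dimensional $|\cdot|^{-1-\alpha}$ tail along each coordinate axis. Hence $\P^x(E_\lambda)\leq c\gamma R^\alpha\lambda^{-\alpha}=c\gamma a^{-\alpha}$, which is $\leq b/2$ for $\gamma$ chosen small depending on $a$ and $b$.

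For the truncated exit, a standard strong-Markov overshoot argument applied at $\tau_{(x,aR)}(V^\lambda)$ together with a short-time tightness bound reduces $\P^x(\tau_{(x,aR)}(V^\lambda)<t)$ to controlling the quantity $\sup_{s\leq t,\ y\in\mathcal{S}}\P^y(|V^\lambda_s-y|\geq aR/2)$. The latter is handled by summing the estimate of Lemma \ref{bound2}: the bound $p^\lambda(s,y,z)\leq c_1 s^{-d/\alpha}e^{c_2 s-|z-y|/\lambda}$ together with the exponential tail $e^{-|z-y|/\lambda}$ (with the sharp choice $\lambda=aR$) yields a bound $\leq b/2$ once $\gamma$ is small. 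Combining the two contributions finishes the proof.

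\textbf{Main obstacle.} The delicate point is ensuring that the truncated-exit estimate is uniform in $R$: at the time scale $s\sim\gamma R^\alpha$, the factor $e^{c_2 s}$ in Lemma \ref{bound2} grows exponentially in $R^\alpha$, so a direct integration at time $\gamma R^\alpha$ is hopeless. The resolution is either to exploit the sharp balance $\lambda=aR$ so that $e^{-|z-y|/\lambda}$ dominates $e^{c_2 s}$ on $\{|z-y|\geq aR/2\}$, or, more cleanly, to split the interval $[0,\gamma R^\alpha]$ into short sub-intervals of length $O(1)$ where Lemma \ref{bound2} is quantitatively effective, apply the one-step estimate there, and iterate by the Markov property.
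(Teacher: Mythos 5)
Your overall strategy --- truncate at $\lambda\asymp aR$, bound the probability of a jump exceeding $\lambda$ before time $\gamma R^\alpha$ by $c\gamma R^\alpha\lambda^{-\alpha}=c\gamma a^{-\alpha}$, and control the exit of the truncated process via the off-diagonal heat-kernel bound --- is exactly the argument of the cited references (the paper itself offers no proof beyond the citation), and your big-jump estimate is correct. The gap is in the truncated-exit step, which you rightly single out as the main obstacle but do not actually resolve. Your first fix fails: on the relevant range $|z-y|\geq aR/2$ with $\lambda=aR$ the factor $e^{-|z-y|/\lambda}$ is only $\leq e^{-1/2}$, a constant, and cannot absorb $e^{c_2 s}$ with $s\sim\gamma R^\alpha\to\infty$; the exponential tail would only win for $|z-y|\gtrsim R^\alpha$, whereas the sphere you must cross sits at distance $\sim R$. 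Your second fix also fails: chopping $[0,\gamma R^\alpha]$ into unit-length intervals, you would need each increment to be $O(R^{1-\alpha})$ to keep the accumulated displacement below $aR$, and (already for $\alpha\geq 1$) the per-interval probability of exceeding such a threshold is bounded below, so the union bound over $\gamma R^\alpha$ intervals gives nothing.

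The correct resolution is that the truncated heat-kernel bound must carry the scale-invariant exponent $\exp\bigl(c_2 t\lambda^{-\alpha}-|x-y|/\lambda\bigr)$; this is what Lemma 3.1 of \cite{bass_kumagai1} and the Davies-method computation in \cite{xu1} actually produce, and the bare $e^{c_2 t}$ in the paper's Lemma \ref{bound2} appears to be a misstatement (as stated it is too weak to yield Theorem \ref{exit1} uniformly in $R$). With the corrected form, taking $\lambda=\theta aR$ gives, uniformly in $R$ and $s\leq\gamma R^\alpha$, a bound of order $e^{c_2\gamma(\theta a)^{-\alpha}-c/\theta}$ for $\P^y(|V^\lambda_s-y|\geq aR/2)$, which is $\leq b/2$ once $\theta$ and then $\gamma$ are chosen small; alternatively one can bypass the heat kernel entirely and obtain $\P^x(\tau_{B(x,r)}\leq t)\leq c\,t\,r^{-\alpha}$ from Dynkin's formula applied to a test function $f$ with $\|\mathcal{L}^\rho f\|_\infty\leq c\,r^{-\alpha}$ (the route of Proposition 4.1 in \cite{chen_kumagai}, which works verbatim for the singular coordinate-axis jump kernel). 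Either repair closes your argument; as written, neither of your two proposed resolutions does.
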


\medskip

Recall the definition of the rescaled process $V$. Using Proposition 2.7, Remark 2.8 and Theorem 2.11 in \cite{xu1}, we obtain the following near diagonal lower bound for $p(t,x,y)$.
\begin{proposition} \label{lower}
There exists $c>0$ such that
\[
p(t,x,y)\geq c t^{-d/\alpha},
\]
for all $t\geq \rho^{-\alpha}$ and 
$|x-y|<2t^{1/\alpha}$.
\end{proposition}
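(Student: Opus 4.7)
The plan is to reduce the statement to a lower bound for the unscaled process $Y$ via the scaling identity \eref{scale}, and then to cite the near-diagonal lower bound already established for $Y$ in \cite{xu1}.

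First, observe that by \eref{scale} we have
\[
p(t,x,y)=\rho^{d}\,q_Y(\rho^{\alpha}t,\rho x,\rho y).
\]
Thus, writing $s=\rho^{\alpha}t$, $x'=\rho x$ and $y'=\rho y$, the condition $t\geq \rho^{-\alpha}$ translates into $s\geq 1$, while $|x-y|<2t^{1/\alpha}$ becomes $|x'-y'|<2s^{1/\alpha}$. It therefore suffices to prove that there is a constant $c>0$, independent of $\rho$, such that
\[
q_Y(s,x',y')\geq c\,s^{-d/\alpha}\quad\text{whenever } s\geq 1 \text{ and } |x'-y'|<2s^{1/\alpha}.
\]
The uniformity in $\rho$ is automatic, since this lower bound is an intrinsic statement about $Y$ on $\Z^d$.

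Next, I would invoke the results of \cite{xu1} in the order indicated. The idea there is to work with the truncated process $V^{\lambda}$ whose off-diagonal decay is controlled by Lemma \ref{bound2}; a suitable weighted Poincar\'{e} inequality (Proposition~2.7 of \cite{xu1}, as sharpened in Remark~2.8) then produces an on-diagonal lower bound of order $s^{-d/\alpha}$ for the heat kernel of $V^{\lambda}$ on balls of appropriate radius. Finally, a chaining argument (Theorem~2.11 of \cite{xu1}) together with the exit time estimate of Theorem \ref{exit1} spreads this on-diagonal lower bound to pairs of points with $|x'-y'|<2s^{1/\alpha}$, and the contribution of the removed long jumps is controlled because the difference $q_Y-q^{\lambda}_Y$ has polynomial-in-$s$ smallness relative to $s^{-d/\alpha}$.

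The main obstacle is keeping the constants uniform in $\rho$: the weighted Poincar\'{e} inequality must hold with a constant depending only on $\alpha$, $d$, $\kappa_1$, $\kappa_2$, since otherwise the on-diagonal lower bound for $q_Y$ would deteriorate and spoil the equivalence $p(t,x,y)\geq c t^{-d/\alpha}$ after the rescaling. This is precisely why the argument is routed through $Y$, where the conductance function is fixed and satisfies (A1)--(A4) of \cite{xu1} with constants independent of $\rho$, rather than through $V$ directly. Once that uniformity is in hand, the chaining step and the passage from $V^{\lambda}$ back to $V$ are standard.
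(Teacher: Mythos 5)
Your proposal matches the paper's argument: the paper also obtains the bound by recalling the scaling relation \eref{scale} for the rescaled process $V$ and then citing Proposition 2.7, Remark 2.8 and Theorem 2.11 of \cite{xu1}, exactly the three results you invoke and in the same role. Your additional remarks on routing the argument through $Y$ to keep constants uniform in $\rho$ are a sensible elaboration of the same approach, not a different one.
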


Theorem \ref{exit1} and the proof of Lemma 4.5 in
\cite{bass_kumagai1} imply the following lemma.
\begin{lemma} \label{hit}
Given $\delta>0$ there exists $\kappa>0$ such that if
$x,y\in\mathcal{S}$, and $A\subset\mathcal{S}$ with $dist(x,A)$ and
$dist(y,A)$ both larger than $\kappa t^{1/2}$, then
\[
\P^x(V_t=y, T_A\leq t)\leq \delta t^{-d/\alpha}\rho^{-d}.
\]
\end{lemma}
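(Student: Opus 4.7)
My plan follows the scheme of Bass--Kumagai's Lemma 4.5 in \cite{bass_kumagai1}, adapted to the singular jump kernel. The starting point is the strong Markov property at $T_A$ (understood, as in the cited reference, as the first time $V$ meets $A$), which gives
\begin{equation*}
  \P^x(V_t = y,\, T_A \le t) \;=\; \E^x\bigl[ 1_{\{T_A \le t\}}\, p(t - T_A,\, V_{T_A},\, y)\, \rho^{-d}\bigr].
\end{equation*}
Since $V_{T_A}$ sits in (or immediately adjacent to) $A$ and $\dist(y,A) > \kappa t^{1/\alpha}$, this reduces matters to showing that $p(s,z,y) \le (\delta/C)\, t^{-d/\alpha}$ uniformly for $s \in (0,t]$ and $|z-y| \ge \kappa t^{1/\alpha}$, together with a complementary bound on $\P^x(T_A$ close to $t)$.

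For the off-diagonal control of $p(s,z,y)$, the natural tool is the truncated process. Set $\lambda = \eta|z-y|$ for a small $\eta > 0$ to be fixed later, and decompose $V$ as $V^\lambda$ plus the effect of jumps of size greater than $\lambda$ (a Meyer-type decomposition). Lemma~\ref{bound2} then gives
\[
   p^\lambda(s,z,y) \;\le\; c_1 s^{-d/\alpha} e^{c_2 t}\, e^{-1/\eta},
\]
and the prefactor $e^{-1/\eta}$ can be made to dominate $e^{c_2 t}$ by choosing $\eta$ small, i.e. $\kappa$ large. The contribution from trajectories containing at least one jump of size $>\lambda$ is controlled by the Lévy system: the rate of such jumps from any point is $O(\lambda^{-\alpha})$, yielding a correction of order $c s \lambda^{-\alpha} = c s (\eta|z-y|)^{-\alpha} \le c(\eta\kappa)^{-\alpha}$, which is again small for $\kappa$ large.

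The one remaining danger is that $T_A$ might be so close to $t$ that $s = t - T_A$ is below the regime where the on-diagonal estimate $p(s,z,y) \le c_1 s^{-d/\alpha}$ is useful. Here Theorem~\ref{exit1} enters: since $\dist(x,A) > \kappa t^{1/\alpha}$, we have $T_A \ge T_{B(x,\kappa t^{1/\alpha})}(V)$, so Theorem~\ref{exit1} (applied with $a = \kappa$, $R = t^{1/\alpha}$, and $b$ small) makes $\P^x(T_A \le \gamma t)$ as small as we wish, at the price of a small $\gamma = \gamma(\kappa,b)$. Splitting the expectation at $T_A = (1-\gamma)t$ and bounding the two pieces by the previous paragraph and by this exit-time estimate respectively then produces the target bound $\delta t^{-d/\alpha}\rho^{-d}$.

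The main obstacle is the absence of an unconditional off-diagonal upper bound on $p(s,z,y)$ in the singular setting---the only off-diagonal information is Lemma~\ref{bound2} for the truncated process. Balancing the truncation parameter $\lambda$ against the jump-rate $\lambda^{-\alpha}$, and checking that the Lévy-system correction is properly controlled when jumps are restricted to the coordinate axes, is where the technical work will concentrate.
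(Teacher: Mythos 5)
Your overall frame (strong Markov at $T_A$, then an off-diagonal bound on $p(t-T_A,V_{T_A},y)$) matches the first half of the intended argument, but there is a genuine gap in how you dispose of the regime where $T_A$ is close to $t$, and the tool you reach for there cannot do the job. Theorem \ref{exit1} bounds $\P^x\big(\tau_{B(x,\kappa t^{1/\alpha})}<\gamma t\big)$, i.e.\ the probability that the process leaves a ball \emph{early}; since $T_A\ge \tau_{B(x,\kappa t^{1/\alpha})}$ it makes $\P^x(T_A\le \gamma t)$ small, which is the opposite tail from the dangerous event $\{T_A\in((1-\gamma)t,t]\}$. There is no reason for $\P^x(T_A\in((1-\gamma)t,t])$ to be small, and even if you had $\P^x(E)\le b$ for that event, the required conclusion is $\P^x(V_t=y,\,E)\le \delta t^{-d/\alpha}\rho^{-d}$ --- a bound carrying the factor $t^{-d/\alpha}\rho^{-d}$ --- which cannot be extracted from $\P^x(E)\le b$ alone, nor from the strong Markov identity, because $p(t-T_A,\cdot,y)\le c(t-T_A)^{-d/\alpha}$ blows up as $T_A\uparrow t$. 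The missing idea, which is the heart of the proof of Lemma 4.5 in \cite{bass_kumagai1} that the paper invokes, is \emph{reversibility}: split at $T_A\le t/2$ versus $T_A\in(t/2,t]$, and for the second piece use the symmetry of $V$ with respect to $\mu^\rho$ to rewrite it as (at most) $\P^y(V_t=x,\,T_A<t/2)$ for the chain started at $y$, since the time-reversed path from $y$ must meet $A$ before time $t/2$. This is exactly why the hypothesis demands that \emph{both} $\dist(x,A)$ and $\dist(y,A)$ be large; in your argument the assumption on $\dist(x,A)$ is never really used, which is a sign the late-hitting case has not been handled. After the reversal, both pieces have remaining time $t-T_A\ge t/2$, and one only needs $p(s,z,w)\le \delta' s^{-d/\alpha}$ for $s\ge t/2$ and $|z-w|\ge\kappa t^{1/\alpha}$, which is where Theorem \ref{exit1} legitimately enters (via one more splitting of $[0,s]$ into halves plus the on-diagonal bound), rather than through your truncation computation.

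Two smaller points on the off-diagonal step you do carry out. First, the claim that $e^{-1/\eta}$ can be made to ``dominate'' $e^{c_2 t}$ by shrinking $\eta$ is not uniform in $t$: for fixed $\eta$ the factor $e^{c_2 t}$ is unbounded, so the truncation level must be tied to $t$ (or one must use the form of the estimate with exponent $c_2 t\lambda^{-\alpha}-|x-y|/\lambda$ and optimize in $\lambda$). Second, once the reversal reduction is in place this entire truncation/Meyer analysis becomes unnecessary, since the off-diagonal smallness is only needed for $s\ge t/2$ and follows from the exit-time estimate alone; this is the cleaner route the paper intends.
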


\begin{proposition}  \label{exit2} For all $t\geq\rho^{-\alpha}$, there exist $c_1>0$ and $\theta\in(0,1)$ such that if $|x-z|$,
$|y-z|\leq t^{1/\alpha}$, $x,y,z\in\mathcal{S}$,
 and $r\geq t^{1/\alpha}/\theta$, then
\begin{equation} \label{eq1}
\P^x(V_t=y,\tau_{B_{(z,r)}}>t)\geq c_1 t^{-d/\alpha}\rho^{-d}.
\end{equation}
\end{proposition}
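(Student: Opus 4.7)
The plan is to reduce the constrained lower bound to the unconstrained near-diagonal lower bound of Proposition \ref{lower} by writing
\[
\P^x(V_t=y,\tau_{B(z,r)}>t)=\P^x(V_t=y)-\P^x(V_t=y,\tau_{B(z,r)}\leq t),
\]
so it suffices to show that the subtracted ``escape'' probability is at most half of the full one-step transition probability.

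The first term on the right is handled directly by Proposition \ref{lower}. Since $|x-y|\leq|x-z|+|z-y|\leq 2t^{1/\alpha}$ and $t\geq\rho^{-\alpha}$, that proposition gives
\[
\P^x(V_t=y)=p(t,x,y)\mu^\rho_y\geq c_0\,t^{-d/\alpha}\rho^{-d}
\]
for an absolute constant $c_0>0$. For the second term I would apply Lemma \ref{hit} with $A=B(z,r)^c$, so that the event $\{\tau_{B(z,r)}\leq t\}$ agrees with $\{T_A\leq t\}$. The hypotheses $|x-z|,|y-z|\leq t^{1/\alpha}$ together with $r\geq t^{1/\alpha}/\theta$ force
\[
\dist(x,A),\ \dist(y,A)\geq r-t^{1/\alpha}\geq (\theta^{-1}-1)\,t^{1/\alpha},
\]
so the distance condition of Lemma \ref{hit} is met as soon as $\theta^{-1}-1$ dominates the constant $\kappa$ produced by that lemma. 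Using the lemma with threshold $\delta=c_0/2$ then yields
\[
\P^x(V_t=y,\tau_{B(z,r)}\leq t)\leq\tfrac{c_0}{2}\,t^{-d/\alpha}\rho^{-d},
\]
and subtracting establishes \eref{eq1} with $c_1=c_0/2$.

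The only delicate point is the order in which the constants are chosen: $c_0$ is determined first by Proposition \ref{lower}; this fixes $\delta=c_0/2$ in Lemma \ref{hit}; that lemma in turn returns a value of $\kappa$; only then is $\theta\in(0,1)$ picked small enough that $\theta^{-1}-1\geq\kappa$, which is precisely what turns the hypothesis $r\geq t^{1/\alpha}/\theta$ into the distance condition required by Lemma \ref{hit}. Once this ordering is respected, everything else is routine bookkeeping.
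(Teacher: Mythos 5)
Your argument is correct and is exactly the one the paper intends: the paper's proof of Proposition \ref{exit2} consists of the single line that it ``follows easily from Proposition \ref{lower} and Lemma \ref{hit},'' and your decomposition of $\P^x(V_t=y)$, with $\delta=c_0/2$ in Lemma \ref{hit} and $\theta$ chosen only after $\kappa$ is fixed, is the standard way to fill that in. (One side remark: Lemma \ref{hit} as printed states the distance condition with $\kappa t^{1/2}$, where $\kappa t^{1/\alpha}$ is surely meant; you have read it the intended way.)
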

\begin{proof} This follows easily from Proposition \ref{lower} and Lemma \ref{hit}. 
\end{proof}
\begin{remark} 
The above proposition still holds if we replace `` $|x-z|$, $|y-z|\leq t^{1/\alpha}$, $x,y,z\in\mathcal{S}$ '' with `` $|x-y|\leq 2t^{1/\alpha}$, $x,y\in\mathcal{S}$ '' and `` z '' in \eref{eq1} with `` x '', respectively.
\end{remark}

As an application of Proposition \ref{exit2}, we have
\begin{corollary} \label{exit3}
For each $0<\epsilon<1$, there exists
$\theta=\theta(\epsilon)\in(0,1)$ with the following property: if
$x,y\in\mathcal{S}$ with $|x-y|\leq t^{1/\alpha}$, $t\in[0,\theta^{\alpha}r^\alpha)$, and $\Gamma\subset B(y,
t^{1/\alpha})\cap\mathcal{S}$ satisfies
$\mu^{\rho}(\Gamma)t^{-d/\alpha}\geq\epsilon$, then
\[
\P^x(V_t\in\Gamma~and~\tau_{B(y,r)}>t)>c_1\epsilon.
\]
\end{corollary}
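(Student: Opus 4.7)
The plan is to decompose the event according to where $V_t$ lands in $\Gamma$, then apply Proposition \ref{exit2} pointwise. Concretely, I would write
\begin{equation*}
\P^x(V_t\in\Gamma,\,\tau_{B(y,r)}>t)=\sum_{w\in\Gamma}\P^x(V_t=w,\,\tau_{B(y,r)}>t),
\end{equation*}
where the sum is finite because $\Gamma$ is a lattice subset of the bounded set $B(y,t^{1/\alpha})$.

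For each $w\in\Gamma$, I apply Proposition \ref{exit2} with the roles of ``$x,y,z$'' in that proposition played by $x$, $w$, and our ball centre $y$, respectively. The required hypotheses $|x-y|\leq t^{1/\alpha}$ and $|w-y|\leq t^{1/\alpha}$ follow from the standing assumption and from $w\in\Gamma\subset B(y,t^{1/\alpha})$. Choosing $\theta(\epsilon)$ no larger than the constant $\theta$ produced by Proposition \ref{exit2} ensures $r>t^{1/\alpha}/\theta$ whenever $t\in[0,\theta^{\alpha}r^{\alpha})$, so each summand is bounded below by $c_1\,t^{-d/\alpha}\rho^{-d}$.

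Summing and using $|\Gamma|\rho^{-d}=\mu^{\rho}(\Gamma)$ together with the density hypothesis $\mu^{\rho}(\Gamma)\,t^{-d/\alpha}\geq\epsilon$ gives
\begin{equation*}
\P^x(V_t\in\Gamma,\,\tau_{B(y,r)}>t)\;\geq\; c_1\, t^{-d/\alpha}\,\mu^{\rho}(\Gamma)\;\geq\; c_1\,\epsilon,
\end{equation*}
which is the desired inequality, with the constant in front of $\epsilon$ being exactly the lower-bound constant furnished by Proposition \ref{exit2}.

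The main point of care is the low-$t$ regime $t<\rho^{-\alpha}$, where Proposition \ref{exit2} does not directly apply. In that regime, however, $B(y,t^{1/\alpha})\cap\mathcal{S}$ contains at most the single point $y$, so $\Gamma\subseteq\{y\}$; the density hypothesis forces $\Gamma=\{y\}$ and $t^{d/\alpha}\leq\epsilon^{-1}\rho^{-d}$, and the conclusion then reduces to a direct lower bound on $\P^x(V_t=y,\,\tau_{B(y,r)}>t)$. Since $V$ waits at $y$ for an exponential time of rate $1$ (after rescaling, rate $\rho^{\alpha}$), the event $\{V_t=y\}$ has probability bounded below for such small $t$, and combining with the exit-time control from Theorem \ref{exit1} handles the boundary case. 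I expect this small-$t$ bookkeeping to be the only real obstacle; the substance of the argument is the one-line summation of the pointwise lower bound.
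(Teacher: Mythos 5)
Your argument is correct and is exactly the intended one: the paper gives no written proof, simply presenting the corollary ``as an application of Proposition \ref{exit2}'', and the summation of the pointwise bound over $w\in\Gamma$ with $z=y$ is that application. Your extra handling of the regime $t<\rho^{-\alpha}$ (where $\Gamma=\{y\}$, $x=y$, and the exponential holding time gives $\P^y(V_t=y,\ \tau_{B(y,r)}>t)\geq e^{-1}>c_1\epsilon$) is a legitimate and welcome patch of a case the paper's statement allows but its cited proposition does not cover.
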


\medskip

In the remaining of this section we show the regularity result for $V$. Since $V$ is a Hunt process, there is a L\'{e}vy system formula for it. We refer to \cite{chen_kumagai} for its proof.
\begin{lemma} \label{levy}
Let $f:\R^+\times \mathcal{S}\times\mathcal{S}\to\R^+$ be a bounded
measurable function vanishing on the diagonal. Then, for all
$x\in\mathcal{S}$ and predictable stopping time $T$, we have
\[
\E^x\big[\sum_{s\leq
T}f(s,V_{s-},V_s)\big]=\E^x\Big[\int^T_0 \sum_{y\in\mathcal{S}}f(s,V_s,y)\, C^{\rho}(V_s,y)\rho^{d}\, ds\Big].
\]
\end{lemma}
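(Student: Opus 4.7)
The plan is the standard two-step argument for Lévy systems of pure-jump Markov chains: identify the jump intensities of $V$ explicitly, verify the claimed identity on a generating family of elementary test functions using classical counting-process martingales, and then extend by a monotone class argument.

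For the identification step, the jump intensities of $V$ come directly from the jumping kernel $C^\rho$ via the construction. Since $V$ is a continuous-time Markov chain on $\mathcal{S}$ whose generator can be written as $L^\rho f(x)=\sum_{y\in\mathcal{S}}\bigl(f(y)-f(x)\bigr)\,C^\rho(x,y)\rho^d$, the jump rate from $x$ to $y\neq x$ equals $q(x,y):=C^\rho(x,y)\rho^d$. This is consistent both with the Dirichlet form $\mathcal{E}^\rho$ paired against $\mu^\rho_x=\rho^{-d}$ and with the explicit rescaling $V_t=\rho^{-1}Y_{\rho^\alpha t}$ together with $C^\rho(x,y)=\rho^{\alpha-d}C(\rho x,\rho y)$.

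Next, for elementary test functions of the form $f(s,x,y)=g(s)\mathbf{1}_{\{a\}}(x)\mathbf{1}_{\{b\}}(y)$ with $a\neq b$ in $\mathcal{S}$ and $g\geq 0$ bounded Borel, the standard compensator identification for a continuous-time Markov chain gives that the counting process $N^{a,b}_t:=\#\{s\leq t:V_{s-}=a,\,V_s=b\}$ has $\F_t$-compensator $q(a,b)\int_0^t\mathbf{1}_{\{V_s=a\}}\,ds$, so that
\[
M^{a,b}_t:=N^{a,b}_t-q(a,b)\int_0^t\mathbf{1}_{\{V_s=a\}}\,ds
\]
is a martingale. Integrating against $g$ (using predictability of the deterministic process $s\mapsto g(s)$, after routine approximation by simple $g$) preserves the martingale property, and optional sampling at the predictable stopping time $T$ (applied first to $T\wedge n$ and then letting $n\to\infty$ by monotone convergence, which is permissible because $f\geq 0$) yields the claimed identity for such $f$.

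Finally, a routine monotone class / Dynkin $\pi$-$\lambda$ argument extends the identity from this elementary class to arbitrary bounded nonnegative measurable $f$ on $\R^+\times\mathcal{S}\times\mathcal{S}$ vanishing on the diagonal, since both sides define $\sigma$-additive expressions in $f$ and boundedness provides the integrability needed to pass to the limit. The main obstacle is the jump-rate identification in the first step; once $q(a,b)=C^\rho(a,b)\rho^d$ is pinned down, everything else is the standard martingale treatment of countable-state Markov chains, which is exactly what the general Hunt-process derivation in \cite{chen_kumagai} specializes to in our discrete setting.
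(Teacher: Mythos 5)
Your proposal is correct in substance, but it takes a genuinely different route from the paper: the paper gives no argument at all beyond the remark that $V$ is a Hunt process and a citation to \cite{chen_kumagai} for the general L\'evy system formula, whereas you give a self-contained derivation tailored to the countable state space --- identify the jump rates, check the identity for $f(s,x,y)=g(s)\mathbf{1}_{\{a\}}(x)\mathbf{1}_{\{b\}}(y)$ via the compensator of the counting process $N^{a,b}$, and extend by ($\sigma$-)additivity. Your route is more elementary and makes the constant in the L\'evy kernel explicit; the paper's route buys generality and brevity at the cost of hiding exactly the identification you foreground. Two remarks. First, since $\mathcal{S}$ is countable, every nonnegative $f$ vanishing on the diagonal is the countable sum $\sum_{a\neq b}f(\cdot,a,b)\mathbf{1}_{\{a\}}\mathbf{1}_{\{b\}}$, so the final extension step is just Tonelli; no $\pi$-$\lambda$ machinery is needed. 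Second, and more importantly, your claim that $q(x,y)=C^{\rho}(x,y)\rho^{d}$ is ``consistent with the explicit rescaling $V_t=\rho^{-1}Y_{\rho^{\alpha}t}$'' is not literally true: the paper constructs $Y$ with unit-rate exponential holding times and jump probabilities $C(x,y)/G_x$, so the rescaled chain has jump rates $\rho^{\alpha}C(\rho x,\rho y)/G_{\rho x}$, which differ from $C^{\rho}(x,y)\rho^{d}=\rho^{\alpha}C(\rho x,\rho y)$ by the nonconstant factor $G_{\rho x}$ (equivalently, the constant-speed chain is symmetric with respect to the measure $G$, not the uniform measure $\mu^{\rho}$). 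This is an inconsistency already present in the paper between the construction of $Y$ and the stated form of $(\mathcal{E}^{\rho},\mathcal{F}_{\rho})$; your proof is valid for the process actually associated with that Dirichlet form on $L^{2}(\mathcal{S},\mu^{\rho})$, which is what the lemma's statement presupposes, but you should not assert that both identifications agree without addressing the factor $G_{\rho x}$.
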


Let $W_t=W_0+t$ be a deterministic process. Then $Z=(W_t, V_t)$ is the
space-time process on $\R^+\times\mathcal{S}$ associated with $V$.
We say that a nonnegative Borel measurable function $h(t,x)$ on
$\R^+\times\mathcal{S}$ is parabolic in an open set
$B\subset\R^+\times\mathcal{S}$ if for all open relative compact
sets $B'\subset B$ and $(t,x)\in B'$,
\[
q(t,x)=\E^{(t,x)}\big[h(Z_{\tau(B'; Z_s)})\big].
\]

For any $t_0>0$, by Lemma 4.5 in \cite{chen_kumagai}, the function
$q^{\rho}(t,x)=p(t_0-t,x,y)$ is parabolic in $[0,t_0)\times\mathcal{S}$.

\begin{lemma} \label{lem1}
For each $\delta\in(0,1)$, there exists
$\gamma=\gamma_{\delta}\in(0,1)$ such that for $t>0$, and
$x\in\mathcal{S}$, if $A\subset
Q^{\rho}_{\gamma}(t,x,r):=[t,t+\gamma
r^{\alpha}]\times (B(x,r)\cap \mathcal{S})$ satisfies
$\frac{m\otimes\mu^{\rho}(A)}{m\otimes\mu^{\rho}(Q^{\rho}_{\gamma}(t,x,r))}\geq\delta$,
then
\[
\P^{(t,x)}\big( T_A(Z)<\tau_{Q^{\rho}_{\gamma}(t,x,r)}(Z)\big)\geq c_1\delta.
\]
\end{lemma}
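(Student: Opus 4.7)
The approach is a standard parabolic occupation-time argument, which reduces the hitting probability to a lower bound on a killed-density integral.

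Define the time slices $A_s := \{y \in \mathcal{S} : (s,y) \in A\}$ for $s \in [t, t+\gamma r^\alpha]$, so that by Fubini
\[
\int_t^{t+\gamma r^\alpha} \mu^\rho(A_s)\, ds \;=\; m \otimes \mu^\rho(A) \;\geq\; \delta\, \gamma r^\alpha\, \mu^\rho(B(x,r) \cap \mathcal{S}).
\]
Set $p_0 := \P^{(t,x)}(T_A(Z) < \tau_{Q^\rho_\gamma}(Z))$. Since the total occupation time of $Z$ in $A$ before exiting $Q^\rho_\gamma$ is at most $\gamma r^\alpha$ on the event $\{T_A(Z) < \tau_{Q^\rho_\gamma}(Z)\}$ and zero on its complement,
\[
p_0\, \gamma r^\alpha \;\geq\; \E^{(t,x)}\Bigl[\int_0^{\tau_{Q^\rho_\gamma}(Z)} \mathbb{1}_A(Z_s)\, ds\Bigr] \;=\; \int_0^{\gamma r^\alpha} \P^x\bigl(V_s \in A_{t+s},\; s < \tau_{B(x,r)}(V)\bigr)\, ds.
\]
The problem therefore reduces to bounding the integrand below by a constant multiple of $s^{-d/\alpha}\mu^\rho(A_{t+s})$.

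For a pointwise lower bound on the integrand I would combine Proposition~\ref{lower} with Lemma~\ref{hit}. Proposition~\ref{lower} gives $p(s,x,y) \geq c_0 s^{-d/\alpha}$ for $s \in [\rho^{-\alpha}, \gamma r^\alpha]$ and $|x-y| \leq 2 s^{1/\alpha}$; choosing the free parameter in Lemma~\ref{hit} small compared to $c_0$ and picking $\gamma$ small enough that $(2+\kappa)s^{1/\alpha} \leq r$ for all such $s$, one subtracts off the contribution of paths that have already exited $B(x,r)$ and then sums over $y \in A_{t+s} \cap B(x, 2 s^{1/\alpha})$ to get
\[
\P^x\bigl(V_s \in A_{t+s},\; s < \tau_{B(x,r)}\bigr) \;\geq\; c_1\, s^{-d/\alpha}\, \mu^\rho\bigl(A_{t+s} \cap B(x, 2 s^{1/\alpha})\bigr).
\]

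The principal obstacle is that this bound only captures the portion of $A_{t+s}$ near the starting point, while $A_{t+s}$ may be spread throughout the larger ball $B(x,r)$. To close the gap I would iterate Corollary~\ref{exit3} via the strong Markov property along a finite chain of intermediate balls of radius $\sim s^{1/\alpha}$ covering $B(x,r)\cap \mathcal{S}$, thereby upgrading the above to $\P^x(V_s \in A_{t+s},\, s < \tau_{B(x,r)}) \geq c_2\, s^{-d/\alpha}\, \mu^\rho(A_{t+s})$ uniformly on $s \in [\rho^{-\alpha}, \gamma r^\alpha]$. Substituting back into the occupation-time inequality and using $\mu^\rho(B(x,r)\cap \mathcal{S}) \asymp r^d$ (valid once $r\rho\gtrsim 1$; the complementary regime is a short discrete check) produces $p_0 \geq c\, \delta$ after choosing $\gamma = \gamma_\delta$ appropriately.
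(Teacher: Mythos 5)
Your overall framework---the occupation-time inequality
$\gamma r^\alpha\,\P^{(t,x)}\big(T_A(Z)<\tau_{Q^{\rho}_{\gamma}}(Z)\big)\ge\E^{(t,x)}\int_0^{\tau_{Q^{\rho}_{\gamma}}}1_A(Z_s)\,ds=\int_0^{\gamma r^\alpha}\P^x\big(V_s\in A_{t+s},\,s<\tau_{B(x,r)}\big)\,ds$
combined with a killed-density lower bound---is the standard route, and it is essentially what the paper invokes (its proof is a one-line reduction to Corollary \ref{exit3} and the argument of Lemma 4.7 in \cite{bass_kumagai1}). Steps 1--3 of your plan are sound, and you correctly identify where the difficulty lies.

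The gap is Step 4. The uniform bound $\P^x(V_s\in A_{t+s},\,s<\tau_{B(x,r)})\ge c_2\,s^{-d/\alpha}\mu^\rho(A_{t+s})$ for \emph{all} $s\in[\rho^{-\alpha},\gamma r^\alpha]$ with $c_2$ independent of $s/r^\alpha$ is false, and no chaining argument can produce it. If $A_{t+s}$ lies in the annulus $B(x,r)\setminus B(x,r/2)$ and $s=\epsilon r^\alpha$ with $\epsilon$ small, then even the unkilled density satisfies $p(s,x,y)\lesssim s\,|x-y|^{-d-\alpha}\asymp \epsilon^{1+d/\alpha}s^{-d/\alpha}$, smaller than $s^{-d/\alpha}$ by the factor $\epsilon^{1+d/\alpha}\to 0$; for the singular kernel here it is smaller still, since points $y$ with $y-x$ off the coordinate axes require several jumps. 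Correspondingly, your chain of balls of radius $\sim s^{1/\alpha}$ has length of order $r/s^{1/\alpha}$, each application of Corollary \ref{exit3} costs a fixed multiplicative factor, and the resulting constant decays geometrically in $r/s^{1/\alpha}$---it is not uniform down to $s=\rho^{-\alpha}$. The repair is to use the pointwise bound only for $s$ comparable to $\gamma r^\alpha$ (say $s\in[\gamma r^\alpha/2,\gamma r^\alpha]$), where the chain length is $O(\gamma^{-1/\alpha})$ and the constant depends only on $\gamma=\gamma_\delta$; but then two further points must be handled: (i) the hypothesis does not guarantee that a definite fraction of the mass of $A$ sits at such times (consider $A=[t,t+\gamma r^\alpha/2]\times(B(x,r)\cap\mathcal{S})$), so an additional reduction is needed for mass concentrated at early times; and (ii) the constant obtained is $c(\gamma_\delta)\,\delta$ rather than $c_1\delta$ with $c_1$ universal, so the dependence of $\gamma$ on $\delta$ must be tracked to recover the stated conclusion. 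As written, Step 4 is the crux of the lemma and is not established.
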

\begin{proof} Thanks to Corollary \ref{exit3}, this follows from using similar arguments in the proof of Lemma 4.7 in \cite{bass_kumagai1}.
\end{proof}

\begin{lemma} \label{lem2}
There exists a positive constant $c_1$ such that for $s>2r$ and
$(t,x)\in[0,\infty)\times\mathcal{S}$
\begin{equation*}
\P^{(t,x)}\big(Z_{\tau_{Q^{\rho}(t,x,r)}}\notin
Q^{\rho}(t,x,s)\big) \leq c_1\frac{r^{\alpha}}{s^{\alpha}}.
\end{equation*}
\end{lemma}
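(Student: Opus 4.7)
The plan is to control the probability by counting the jumps of $V$ that could directly carry it from $B(x,r)$ out of $B(x,s)$, and then bound these via Lemma \ref{levy}. Since $\alpha>0$ and $s>2r$, we have $s^\alpha>r^\alpha$, so the time coordinate of $Z$ cannot be the cause of the event: if $Z$ exits $Q^\rho(t,x,r)$ at the time boundary $t+r^\alpha$, then $V_{t+r^\alpha}\in B(x,r)\subset B(x,s)$, so $Z_{\tau}\in Q^\rho(t,x,s)$. Therefore $Z_{\tau_{Q^\rho(t,x,r)}}\notin Q^\rho(t,x,s)$ forces $V$ to jump at some time $u\le T:=r^\alpha\wedge\tau_{B(x,r)}(V)$ from $V_{u-}\in B(x,r)$ to $V_u\notin B(x,s)$, and its probability is bounded by
\[
\E^x\Big[\sum_{u\le T}\mathbf{1}_{\{V_{u-}\in B(x,r),\,V_u\notin B(x,s)\}}\Big].
\]

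Applying Lemma \ref{levy} to the predictable stopping time $T$ with $f(u,v,w)=\mathbf{1}_{B(x,r)}(v)\mathbf{1}_{B(x,s)^c}(w)$ rewrites this expected number of jumps as
\[
\E^x\Big[\int_0^T\mathbf{1}_{B(x,r)}(V_u)\sum_{y\notin B(x,s)}C^\rho(V_u,y)\,\rho^d\,du\Big]\le r^\alpha\sup_{v\in B(x,r)\cap\mathcal{S}}\sum_{y\notin B(x,s)}C^\rho(v,y)\,\rho^d,
\]
using $T\le r^\alpha$. Everything therefore reduces to the deterministic tail estimate $\sup_{v\in B(x,r)\cap\mathcal{S}}\sum_{y\notin B(x,s)}C^\rho(v,y)\,\rho^d\le c\,s^{-\alpha}$, which combined with the previous display yields the claimed bound $c_1 r^\alpha/s^\alpha$.

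For the tail estimate I exploit the axial support of the conductance. For $v\in B(x,r)\cap\mathcal{S}$ and $y\notin B(x,s)$ with $C^\rho(v,y)\ne 0$, the jump vector $z=y-v$ lies in $\cup_{i=1}^d\rho^{-1}\Z e_i\setminus\{0\}$ with $|z|\ge s-r>s/2$. The identity $C^\rho(v,v+z)=\rho^{\alpha-d}C(\rho v,\rho v+\rho z)\le\kappa_2\rho^{-d-1}|z|^{-1-\alpha}$, combined with summing over the $d$ coordinate directions, reduces the problem to $d$ one-dimensional tail sums, giving a bound of order $\rho^{\alpha}\sum_{k\ge\lceil\rho s/2\rceil}k^{-1-\alpha}\le c\,s^{-\alpha}$, with the obvious adjustment when $\rho s<2$. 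The main obstacle is exactly this last step: one needs the axial support of $C^\rho$ and the polynomial decay to combine so that the factors of $\rho$ cancel uniformly in $\rho$; once that is in place, the rest is standard L\'evy-system bookkeeping made possible by the initial observation that for $s>r$ the time coordinate of $Z$ is harmless.
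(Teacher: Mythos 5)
Your proof is correct and follows essentially the same route as the paper: both arguments use the L\'evy system formula of Lemma \ref{levy} to bound the probability of a jump from $B(x,r)$ to $B(x,s)^c$ by (jump rate of order $s^{-\alpha}$) $\times$ (expected time spent before exiting), after first noting that an exit through the time boundary cannot produce the event. The only cosmetic difference is that you bound the time factor by the deterministic cap $\gamma r^{\alpha}$ of the cylinder, while the paper instead derives $\E^x\tau \le c\,r^{\alpha}$ from a second L\'evy-system computation using the lower conductance bound; both are fine, and your explicit verification of the tail estimate $\sum_{|y-v|\ge s/2}C^{\rho}(v,y)\rho^{d}\le c\,s^{-\alpha}$ fills in a step the paper leaves implicit.
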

\begin{proof} For simplicity of notation, we write
$\tau$ for $\tau_{Q^{\rho}(t,x,r)}$. Note that 
\begin{align*}
\P^{(t,x)}\big(Z_{\tau}\notin
Q^{\rho}(t,x,s)\big)=\P^{x}\big(V_{\tau}\notin B(x,s)\cap\mathcal{S}
;\tau\leq\gamma r^{\alpha}\big).
\end{align*}
By Lemma \ref{levy},
\begin{equation*}
\P^{x}\big(V_{\tau}\notin B(x,s)\cap\mathcal{S}\big)
=\E^x\big[\int^{\tau}_0 \sum_{|y-x|\geq
s}C^{\rho}(V_{t},y)\, \rho^{d}\, dt\big]\leq
c_2s^{-\alpha}\E^x(\tau).
\end{equation*}
On the other hand,
\begin{equation*}
1\geq\P^{x}\big(V_{\tau}\notin B(x,r)\cap\mathcal{S})
=\E^x\big[\int^{\tau}_0 \sum_{|y-x|\geq
r}C^{\rho}(V_{t},y)\,\rho^{d}\, dt\big]\geq
c_3r^{-\alpha}\E^x(\tau).
\end{equation*}
Combining these estimates gives the required inequality.
\end{proof}

\medskip

We next derive the regularity result for $V$, which is also needed in the Markov chain approximations.

\begin{theorem} \label{regular}
There exist $c>0$ and $\beta>0$ (independent of $R$ and $\rho$) such
that for every bounded parabolic function $q$ in
$Q^{\rho}(0,x_0,4R)$,
\begin{equation} \label{eq2}
|q(s,x)-q(t,y)|\leq c\, \|q\|_{\infty, R}R^{-\beta}(|t-s|^{1/\alpha}+|y-x|)^{\beta}
\end{equation}
holds for $(s,x), (t,y)\in Q^{\rho}(0,x_0,R)$, where $\|q\|_{\infty, R}:=\sup\limits_{(t,y)\in [0,\gamma(4R)^2]\times\mathcal{S}}|q(t,y)|$. In
particular,
\[
|p(s,x_1,y_1)-p(t,x_2,y_2)|\ \leq  c(t\wedge
s)^{-(d+\beta)/\alpha}(|t-s|^{1/\alpha}+|x_1-x_2|+|y_1-y_2|)^{\beta}.
\]
\end{theorem}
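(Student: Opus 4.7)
The plan is to follow the standard oscillation-decay scheme (as in Bass--Levin and Chen--Kumagai) adapted to our lattice setting. Fix $(s,x), (t,y) \in Q^\rho(0,x_0,R)$ and set $\delta_0 = |t-s|^{1/\alpha} + |y-x|$; if $\delta_0 \ge R/4$ then \eref{eq2} is immediate from $2\|q\|_{\infty,R}$, so I may assume $\delta_0 < R/4$. Choose $\theta \in (0,1/4)$ small enough for Corollary~\ref{exit3} and Lemma~\ref{lem2} to apply, and for $n \ge 0$ set $r_n = \theta^n R$ and nested parabolic cylinders $Q_n \subset Q^\rho(0,x_0,4R)$ of radius $r_n$ shrinking toward $(t,y)$. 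Let $\omega_n = \sup_{Q_n} q - \inf_{Q_n} q$. The goal is the geometric recursion
\[
\omega_{n+1} \le (1 - c_1)\omega_n + c_2 \|q\|_{\infty,R}\,\theta^{\alpha n}.
\]

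To establish it, split $Q_n$ via $A_n^+ = \{(u,z)\in Q_n : q(u,z) \le \tfrac12(\sup_{Q_n}q+\inf_{Q_n}q)\}$ and its complement in $Q_n$; one has $m\otimes\mu^\rho$-measure at least $\tfrac12(m\otimes\mu^\rho)(Q_n)$, say $A_n^+$. By Lemma~\ref{lem1}, for any $(u,z)\in Q_{n+1}$ the space-time process $Z$ started at $(u,z)$ hits $A_n^+$ before exiting $Q_n$ with probability at least some fixed $c_3>0$. Setting $\sigma = T_{A_n^+}(Z) \wedge \tau_{Q_n}(Z)$, parabolicity gives
\[
q(u,z) - \inf_{Q_n} q = \mathbb{E}^{(u,z)}\!\left[q(Z_\sigma) - \inf_{Q_n}q\right].
\]
On $\{\sigma = T_{A_n^+}\}$ the integrand is at most $\omega_n/2$, saving a factor $1-c_3/2$ on half of the oscillation; on $\{\sigma = \tau_{Q_n}\}$, $Z_\sigma$ may land anywhere outside $Q_n$, and a dyadic decomposition into annuli $Q^\rho(t,y,2^k r_n)\setminus Q^\rho(t,y,2^{k-1}r_n)$ combined with Lemma~\ref{lem2} bounds the probability of landing in the $k$-th annulus by $c\,2^{-k\alpha}$. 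Summing against $\|q\|_{\infty,R}$ on annuli inside $Q^\rho(0,x_0,4R)$ yields a perturbation of order $\|q\|_{\infty,R}\,\theta^{\alpha n}$, completing the recursion.

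Iterating and choosing $\theta$ so that $\theta^\alpha < 1-c_1$ gives $\omega_n \le c_4 \|q\|_{\infty,R}(1-c_1)^n$. Picking $n$ with $\theta^{n+1}R \le \delta_0 \le \theta^n R$ produces \eref{eq2} with $\beta = -\log(1-c_1)/\log(1/\theta) > 0$. For the corollary about $p$, fix $y_0$ and observe that $u\mapsto p(t_0-u,z,y_0)$ is parabolic on $[0,t_0)\times\mathcal{S}$; apply \eref{eq2} on a cylinder of radius $R \asymp (t\wedge s)^{1/\alpha}$, where Proposition~3.2 furnishes $\|q\|_{\infty,R}\le c(t\wedge s)^{-d/\alpha}$. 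This handles joint $(t,x)$-regularity; reversing the roles of $x$ and $y$ (via the symmetry $p(u,a,b)=p(u,b,a)$ on the grid) and using the triangle inequality deliver the full joint Hölder bound.

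The principal obstacle is controlling the boundary contribution $\mathbb{E}^{(u,z)}[(q(Z_{\tau_{Q_n}}) - \inf_{Q_n}q)\mathbf{1}_{\{\sigma=\tau_{Q_n}\}}]$: because $V$ is a pure-jump process, $Z_{\tau_{Q_n}}$ may land far outside $Q_n$ in regions where only the crude bound $\|q\|_{\infty,R}$ is available, and this term is what prevents a naive pointwise argument from closing. The dyadic jump-tail estimate from Lemma~\ref{lem2}, summed against $2^{-k\alpha}$, is exactly what tames this term and converts the recursion into a genuine geometric contraction; the Hölder exponent $\beta$ is thereby determined by how $\alpha$, the contraction rate $1-c_1$, and the mesh-ratio $\theta$ trade off. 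A secondary concern, $\rho$-independence of constants, is already built into Proposition~\ref{lower}, Corollary~\ref{exit3}, and Lemmas~\ref{lem1}--\ref{lem2}, so no further work is needed there.
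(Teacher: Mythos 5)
Your overall strategy---oscillation decay over nested parabolic cylinders, using Lemma \ref{lem1} for the hitting estimate and Lemma \ref{lem2} for the jump tails---is exactly the route the paper intends (its proof is a one-line citation of Theorem 4.9 of \cite{bass_kumagai1} together with Lemmas \ref{lem1} and \ref{lem2}). However, your treatment of the boundary term contains a genuine gap. If you bound $q(Z_{\tau_{Q_n}})-\inf_{Q_n}q$ by $2\|q\|_{\infty,R}$ on every dyadic annulus $Q^{\rho}(t,y,2^k r_n)\setminus Q^{\rho}(t,y,2^{k-1}r_n)$ and sum the probabilities $c2^{-k\alpha}$ over $k$, you get $c'\|q\|_{\infty,R}$ with $c'$ independent of $n$, \emph{not} $c_2\|q\|_{\infty,R}\theta^{\alpha n}$: the first annulus alone (scale $2r_n$) already contributes a fixed constant times $\|q\|_{\infty,R}$. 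Only the annuli lying outside the fixed cylinder $Q^{\rho}(0,x_0,R)$ give a contribution of order $(r_n/R)^{\alpha}\|q\|_{\infty,R}=\theta^{n\alpha}\|q\|_{\infty,R}$. With a perturbation that does not decay in $n$, the recursion $\omega_{n+1}\le(1-c_1)\omega_n+c_2\|q\|_{\infty,R}$ yields only boundedness of $\omega_n$, not geometric decay, and no H\"older exponent comes out.

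The standard repair---and what the cited Theorem 4.9 of \cite{bass_kumagai1} actually does---is to run the argument as an induction on the statement $\omega_i\le a_i:=c\,\eta^i\|q\|_{\infty,R}$ for all $i\le n$, and to estimate the contribution of the intermediate annulus $Q_{n-j-1}\setminus Q_{n-j}$ by $a_{n-j-1}\,\P^{(u,z)}\big(Z_{\tau_{Q_n}}\notin Q_{n-j}\big)\le c\,\eta^{n-j-1}\theta^{j\alpha}\|q\|_{\infty,R}$ using the inductive hypothesis on the larger cylinders; choosing $\theta$ small relative to $\eta$ (say $\theta^{\alpha}\le \eta^{2}$) then makes the total jump-out contribution at most a small fraction of $a_{n+1}$ and closes the induction. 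You correctly flag this boundary term as the principal obstacle, and the smallness condition you impose on $\theta$ at the end is of the right kind, but the estimate you actually write down for the term is not strong enough to support the conclusion. The remainder of your argument is sound: the reduction to the case $\delta_0<R/4$, the use of Lemma \ref{lem1} on the sub-level set of measure at least half, and the derivation of the second display from \eref{eq2} via $q(u,z)=p(t_0-u,z,y_0)$, the on-diagonal bound for $\|q\|_{\infty,R}$ with $R\asymp(t\wedge s)^{1/\alpha}$, and the symmetry of $p$ all match the intended proof.
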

\begin{proof} With the help of Lemmas \ref{lem1} and \ref{lem2}, we can prove \eref{eq2} in the same
way as Theorem 4.9 of \cite{bass_kumagai1}.
\end{proof}

\medskip

\section{Approximations}
In this section, we first construct the approximating Markov chains and then give a necessary condition for the weak convergence of these chains to singular stable-like processes $X$ corresponding to the Dirichlet forms $(\mathcal{E},\mathcal{F})$ in \eref{form}.

For $x\in\R^d$ and $n\in\bN$, define
\[
[x]_n=\big([nx_1]/n,\dots,[nx_d]/n\big),\; \mathcal{S}_n=\big\{[x]_n:
x\in\R^d\big\}\; \text{and}\; \mathcal{S}'_n=\big\{[x]_n:
x\in\cup^d_{i=1}\R_i\big\}.
\]
For any $x$ and $y$ in $\mathcal{S}_n$, let $C_n(x,y)$ be conductance on
$\mathcal{S}_n\times\mathcal{S}_n$ satisfying
\[
\left\{
\begin{array}{cl}
\frac{\kappa_1}{|y-x|^{1+\alpha}}\leq C_n(x,y)\leq
\frac{\kappa_2}{|y-x|^{1+\alpha}}, &\quad\text{if}~y-x\in\mathcal{S}'_n-\{0\};\\
C_n(x,y)=0, &\quad\text{otherwise},
\end{array}\right.
\]
$Y^n$ the Markov chain associated with $C_n(x,y)$ and $(\mathcal{E}^n,\mathcal{F}_n)$ the Dirichlet form corresponding to $Y^n$. Let
$p^n(t,x,y)$ be the transition density of $Y^n$. We can extend
$C_n(x,y)$ to $\R^d\times\R^d$ as follows:
\[
C_n(x,y)=C_n([x]_n,[y]_n),\quad\text{for}\; x,y\in\R^d.
\]
If $f$ is a function on $\R^d$, we define its restriction to
$\mathcal{S}_n$ by $R_n f(x)=f(x)$ for $x\in\mathcal{S}_n$. For $\lambda>0$, let
$U^{\lambda}_n$ be the $\lambda$-resolvent for $Y^n$ and $U^{\lambda}$ the $\lambda$-resolvent for $X$.

For any $f$ and $g$ in $L^2(\mathcal{S}_n)$, set $(f,g)_n=\sum_{x\in \mathcal{S}_n} f(x)g(x)\, n^{-d}$ and $\|f\|_{2,n}=\sqrt{(f,f)_n}$.

\medskip
We next prove the Markov chain approximations to singular stable-like processes $X$. The proof of the following result is similar to those in \cite{husseini_kassmann}, \cite{bass_kumagai2} and references therein.
\begin{theorem} Suppose that for each $N\geq 1$,
\begin{equation*}
C_n([x]_n,[y]_n)1_{[N^{-1},N]}(|x-y|)\,dy\,dx\rightarrow
J(x,y)1_{[N^{-1},N]}(|x-y|)\,m(dy)\, dx
\end{equation*}
weakly in the sense of measures as $n\to\infty$. Then for each $x\in\R^d$ and each $t_0>0$
the $\P^{[x]_n}$-laws of $\{Y^n_t; 0\leq t\leq t_0 \}$ converge
weakly to the $\P^x$-law of $\{X_t; 0\leq t\leq t_0 \}$ which corresponds to the Dirichlet form \eref{form}.
\end{theorem}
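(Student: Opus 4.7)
The plan is to follow the standard three-step argument (tightness, identification, convergence) used in the cited works of Husseini--Kassmann and Bass--Kumagai, adapted to handle the singular jump kernel supported on the coordinate axes.

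\textbf{Step 1: Tightness.} First I would verify that the laws $\{\P^{[x]_n}\text{ of } Y^n\}$ are tight in the Skorokhod space $D([0,t_0],\R^d)$. Since $Y^n$ is of the form $V$ with $\rho=n$, Theorem \ref{exit1} yields the uniform modulus-of-continuity control needed for Aldous's criterion: for any stopping times $S_n\leq t_0$ and any $\delta_n\downarrow 0$, the exit time estimate forces $|Y^n_{S_n+\delta_n}-Y^n_{S_n}|\to 0$ in probability, while taking $R$ large gives uniform no-escape-to-infinity. Together these give tightness.

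\textbf{Step 2: Convergence of resolvents.} Next I would show that for every $\lambda>0$ and $f\in C_c(\R^d)$, $U^{\lambda}_n R_n f$ converges, uniformly on compact sets, to $U^{\lambda} f$. The Hölder regularity of Theorem \ref{regular}, applied after integrating in time against $e^{-\lambda t}$, gives equicontinuity and uniform boundedness of $\{U^{\lambda}_n R_n f\}$ on compact sets, so Arzelà--Ascoli extracts a subsequential limit $h$. To identify $h=U^{\lambda}f$, I would test the discrete resolvent identity
\[
\mathcal{E}^n(U^{\lambda}_n R_n f, g) + \lambda (U^{\lambda}_n R_n f, g)_n = (R_n f, g)_n
\]
against any $g\in C_c^{\infty}(\R^d)$ and pass to the limit. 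Splitting the double sum in $\mathcal{E}^n$ according to jump size into near-diagonal ($|x-y|<N^{-1}$), moderate ($N^{-1}\leq |x-y|\leq N$), and tail ($|x-y|>N$) regions, the hypothesis on weak convergence of $C_n([x]_n,[y]_n)\,dy\,dx$ to $J(x,y)\,m(dy)\,dx$ handles the moderate part; the tail part is uniformly $O(N^{-\alpha})$ by the upper bound $c_2|y-x|^{-1-\alpha}$; the near-diagonal part, using the equi-Hölder continuity from Step 1 of $U^{\lambda}_n R_n f$, is $O(N^{-\beta})$ uniformly in $n$ for a suitable exponent. Sending $n\to\infty$ then $N\to\infty$ yields $\mathcal{E}(h,g)+\lambda(h,g)=(f,g)$, so by the uniqueness of the resolvent for the Dirichlet form \eref{form}, $h=U^{\lambda}f$.

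\textbf{Step 3: Conclusion.} Resolvent convergence combined with the Feller-type smoothing given by Theorem \ref{regular} yields convergence of the one-dimensional distributions $\E^{[x]_n}[f(Y^n_t)]\to \E^x[f(X_t)]$ for $f\in C_c(\R^d)$. The Markov property together with the resolvent convergence upgrades this to convergence of all finite-dimensional distributions, and combined with the tightness from Step 1 gives the asserted weak convergence of laws on $D([0,t_0],\R^d)$.

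\textbf{Main obstacle.} The delicate point is handling the small-jump part of the Dirichlet form when passing to the limit, because the limiting measure $m(dy)\,dx$ is singular (supported on coordinate axes) while $C_n([x]_n,[y]_n)\,dy\,dx$ is absolutely continuous. Cutting off jumps smaller than $N^{-1}$ and controlling the resulting error by the uniform Hölder estimate from Theorem \ref{regular} is precisely what allows this singular limit to be absorbed; this is where the near-diagonal lower bound of Proposition \ref{lower}, and thus the weighted Poincaré machinery, enters indirectly through the regularity theorem.
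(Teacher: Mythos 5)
Your overall architecture matches the paper's: tightness via Aldous's criterion and Theorem \ref{exit1}; Arzel\`a--Ascoli plus the regularity theorem for subsequential uniform convergence of resolvents; identification of the limit through the Dirichlet form with a three-way split of jump sizes, using the weak-convergence hypothesis for the moderate range; and recovery of the semigroup and finite-dimensional distributions by uniqueness of Laplace transforms. The one place where your argument genuinely diverges from the paper's is the near-diagonal part of $\mathcal{E}^n(u_n,g)$, and there it has a gap.

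You propose to bound $\sum_{|y-x|<N^{-1}}(u_n(y)-u_n(x))(g(y)-g(x))C_n(x,y)\,n^{-1-d}$ using equi-H\"older continuity of $u_n=U^{\lambda}_nR_nf$. With $|u_n(y)-u_n(x)|\lesssim |y-x|^{\beta}$ and $|g(y)-g(x)|\lesssim |y-x|$, the summand is of order $|y-x|^{\beta-\alpha}$, and the sum along the (discretized) coordinate axes behaves like $\int_0^{N^{-1}}r^{\beta-\alpha}\,dr$. This is finite and tends to $0$ as $N\to\infty$ only if $\beta>\alpha-1$; since the exponent $\beta$ produced by Theorem \ref{regular} is an unspecified small constant, this can fail when $\alpha>1$. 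Moreover, the resolvent is not in fact uniformly $\beta$-H\"older: the Step-1-type argument only yields equicontinuity, because the H\"older constant $t_0^{-(d+\beta)/\alpha}$ blows up as the time cutoff $t_0\to 0$. The paper sidesteps both issues by Cauchy--Schwarz: the near-diagonal term is at most
\[
\big(\mathcal{E}^n(u_n,u_n)\big)^{1/2}\Big(\|\nabla g\|_{\infty}^2\sum_{|y-x|<N^{-1}}|y-x|^2C_n(x,y)\,n^{-1-d}\Big)^{1/2},
\]
and the inner sum is a negative power of $N$ because $\alpha<2$; this needs only the Lipschitz bound on the test function $g$ and the uniform energy bound $\sup_n\mathcal{E}^n(u_n,u_n)<\infty$, which follows from $\mathcal{E}^n(u_n,u_n)=(R_nf,u_n)_n-\lambda\|u_n\|^2_{2,n}\leq 2\lambda^{-1}\sup_n\|R_nf\|^2_{2,n}$. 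That same uniform energy bound is also what shows (via Fatou and the weak-convergence hypothesis) that the subsequential limit $H$ belongs to $\mathcal{F}$ --- a step you omit, but which is needed before invoking uniqueness of the resolvent together with the density of $C^1_0(\R^d)$ in $\mathcal{F}$.
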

\begin{proof} The proof will be done in several steps.

\medskip \noindent
\textbf{Step 1} \quad We show that any subsequence $\{n_j\}$ has a further
subsequence $\{n_{j_k}\}$ such that
$\{U^{\lambda}_{n_{j_k}}R_{n_{j_k}}f\}$ converges uniformly on
compact sets whenever $f\in C_0(\R^d)$.

For each $x\in\mathcal{S}_n$, let $Q_n(x)=\prod^d_{i=1}[x_i,
x_i+1/n]$. If $f$ is a function on $\mathcal{S}_n$,  we define its
extension to $\R^d$ by $E_nf$ which is a Lipschitz-continuous
function $\R^d\to \R$ and satisfies conditions (a) $E_n f(x)=f(x)$ for $x\in\mathcal{S}_n$ and (b) $E_nf$ is linear in each $Q_n(x)$. A construction of such function $E_nf$ is available in \cite{bass_kumagai1}.

For fixed $f\in C_0(\R^d)$,
\[
||U^{\lambda}_n(R_n(f))||_{\infty}\leq ||
R_n(f)||_{\infty}/\lambda\leq || f||_{\infty}/\lambda.
\]
Therefore $\{U^{\lambda}_n(R_nf)\}$ is uniformly bounded, so is
$\{E_nU^{\lambda}_n R_nf\}$. For any $x$ and $y$ in $\R^d$,
\begin{align*}
\big|U^{\lambda}_nR_nf([y]_n)-U^{\lambda}_nR_nf([x]_n)\big|
\leq& \int^{t_0}_0e^{-\lambda t}\sum_{z\in\mathcal{S}_n}|f(z)|\big|
p^n(t,[x]_n,z)-p^n(t,[y]_n,z)\big|\,n^{-d}\,dt\\
&\quad+\int^{\infty}_{t_0}e^{-\lambda t}\sum_{z\in\mathcal{S}_n}\mid
f(z)|\big|p^n(t,[x]_n,z)-p^n(t,[y]_n,z)\big|\,n^{-d}\,dt\\
\leq &\, 2\,\| f\|_{\infty}\,t_0+c_1\,{t_0}^{-(d+\beta)/\alpha}|[x]_n-[y]_n|^{\beta},
\end{align*}
where we used Theorem
\ref{regular} in the last inequality and $c_1$ is a constant independent of $n$. For any $\epsilon>0$, we choose $t_0$ small enough such that the first term
is less than $\epsilon/3$. Fix such $t_0$, we next estimate the second term. Note that
$|[x]_n-[y]_n|\leq |x-y|+2\sqrt{d}/n$. We obtain
\[
|[x]_n-[y]_n|^{\beta}\leq c_2\big(|x-y|^{\beta}+n^{-\beta}\big).
\]
For the fixed $t_0$, there exists $n_0\in\bN$ such that
$c_1\,{t_0}^{-(d+\beta)/\alpha}\,c_2\,n^{-\beta}<\epsilon/3$ for all
$n\geq n_0$. Hence
\[
|U^{\lambda}_nR_nf([y]_n)-U^{\lambda}_nR_nf([x]_n)|\leq
\epsilon
\]
for all $n\geq n_0$ and $|y-x|\leq 1/n_0$. Since
$|[x]_n-x|\leq\sqrt{d}/n$, by the definition of $E_n$ and Theorem
\ref{regular},
\[
\big|E_nU^{\lambda}_nR_nf(x)-U^{\lambda}_n R_nf([x]_n)\big|\leq
c_3\,n^{-\beta},\quad\text{for all}\; x\in\R^d.
\]
Therefore, for any $\epsilon>0$, there exists $n_1\in\bN$ such that
\begin{equation} \label{eq3}
|E_nU^{\lambda}_nR_nf(y)-E_nU^{\lambda}_nR_nf(x)|\leq \epsilon
\end{equation}
for all $n\geq n_1$ and $|y-x|\leq 1/n_1$. This implies that $\{E_nU^{\lambda}_nR_nf\}$
is equicontinuous on $\R^d$. By the Arzel\`{a}-Ascoli Theorem,
any subsequence of $\{E_nU^{\lambda}_nR_nf\}$ has a convergent further subsequence. Therefore, any
subsequence $\{n_j\}$ has a further subsequence $\{n_{j_k}\}$ such
that $\{U^{\lambda}_{n_{j_k}}R_{n_{j_k}}f\}$ converges uniformly on
compact sets whenever $f\in C_0(\R^d)$.

\medskip \noindent
\textbf{Step 2} \quad Suppose that $\{n'\}$ is a subsequence that
$\{U^{\lambda}_{n'}R_{n'}f\}$ converges uniformly to some $H$. We show
$H\in\mathcal{F}$.

For $\lambda>0$, let $u_n=U^{\lambda}_nR_nf$. Then
\begin{equation} \label{eq4}
\mathcal{E}^n(u_n,u_n)=(R_n f,u_n)_n-\lambda\|u_n\|^2_{2,n}.
\end{equation}
Moreover,
\[
\|\lambda u_n\|_{2,n}=\|\lambda U^{\lambda}_n R_n f\|_{2,n}\leq
\|R_n f\|_{2,n}\leq \sup_n \|R_n f\|_{2,n}<\infty,
\]
where we used $\lim\limits_{n\to\infty}\|R_n f\|_{2,n}=\|f\|_2<\infty$
in the last inequality. 

Therefore the right hand side of \eref{eq4} is
bounded by
\[
|(R_nf,u_n)_n|+\lambda\|u\|^2_{2,n}\leq \frac{1}{\lambda}\|R_n
f\|_{2,n}\|\lambda u_n\|_{2,n}+\frac{1}{\lambda}\|\lambda
u_n\|^2_{2,n}\leq\frac{2}{\lambda}\sup_n\|R_nf\|^2_{2,n}.
\]
This implies that $\{\mathcal{E}^n(u_n,u_n)\}$ is uniformly bounded.

Since $u_{n'}$ converges uniformly to $H$ on $\overline{B(0,N)}$ for
$N>0$, by assumption,
\begin{align*}
& \int\int_{N^{-1}\leq |y-x|\leq
N}\big(H(y)-H(x)\big)^2J(x,y)\,m(dy)\,dx\\
&\leq  \lim\sup_{n'\to\infty}
\sum_{x,y\in\mathcal{S}_{n'},|y-x|\leq N}\big(u_{n'}(y)-u_{n'}(x)\big)^2C_{n'}(x,y)(n')^{-1-d}\\
&\leq  \limsup_{n'\to\infty}\mathcal{S}^{n'}(u_{n'},u_{n'})\\
&<\infty.
\end{align*}
On the other hand,
\[
\int_{\overline{B(0,N)}}H^2(x)\,dx\leq \frac{1}{\lambda^2}\sup_n\|R_nf\|^2_{2,n}<\infty.
\]
Combining these estimates and letting $N\to\infty$, we have
\[
\mathcal{E}(H,H)+\|H\|^2_2<\infty
\] 
and thus $H\in\mathcal{F}$.

\medskip \noindent
\textbf{Step 3} \quad  We show
$\lim\limits_{n'\to\infty}\mathcal{E}^{n'}(u_{n'},g)=\mathcal{E}(H,g)$ for all $g\in C^1_0(\R^d)$.

Since $g\in C^1_0(\R^d)$, we can choose $K$ large enough so that the
support of $g$ is contained in $B(0,K)$. By the Cauchy-Schwartz inequality,
\begin{align*}
&\Big|\sum_{x,y\in\mathcal{S}_n,|y-x|>N}(u_n(y)-u_n(x))(
g(y)-g(x)) C_n(x,y)\,n^{-1-d}\Big|\\
&\leq \big(\mathcal{E}^{n}(u_n,u_n)\big)^{1/2}\Big(\sum_{x,y\in\mathcal{S}_n,|y-x|>N}(g(y)-g(x))^2C_n(x,y)\,n^{-1-d}\Big)^{1/2}\\
&\leq 2\,\|g\|_{\infty}\big(\mathcal{E}^n(u_n,u_n)\big)^{1/2}\Big(\sum_{x\in
B(0,K)\cap\mathcal{S}_n}\sum_{|y-x|>N}
C_n(x,y)\, n^{-1-d}\Big)^{1/2}\\
&\leq c_4\|g\|_{\infty}K^dN^{-\alpha}\big(\mathcal{E}^n(u_n,u_n)\big)^{1/2}\\
&\leq c_5\,N^{-\alpha}.
\end{align*}
Similarly,
\begin{align*}
&\Big|\sum_{x,y\in\mathcal{S}_n,|y-x|<N^{-1}}(u_n(y)-u_n(x))(
g(y)-g(x))C_n(x,y)\,n^{-1-d}\Big|\\
&\leq \big(\mathcal{E}^{n}(u_n,u_n)\big)^{1/2}\Big(\sum_{x,y\in\mathcal{S}_n,|y-x|<N^{-1}}(g(y)-g(x))^2C_n(x,y)\,n^{-1-d}\Big)^{1/2}\\
&\leq \|\nabla
g\|_{\infty}\big(\mathcal{E}^n(u_n,u_n)\big)^{1/2}\Big(\sum_{x\in
B(0,K)\cap\mathcal{S}}\sum_{|y-x|<N^{-1}}|y-x|^2C_n(x,y)\,n^{-1-d}\Big)^{1/2}\\
&\leq c_6\, \|\nabla
g\|_{\infty}K^dN^{\alpha-2}(\mathcal{E}^n(u_n,u_n))^{1/2}\\
&\leq c_7\,N^{\alpha-2}.
\end{align*}

Since $H\in\mathcal{F}$, we can choose $N$ large enough such that
\[
\big|\int\int_{|y-x|\notin[N^{-1},N]}(H(y)-H(x))(g(y)-g(x))J(x,y)\, m(dy)\, dx\big|
\]
is small. Recall that $\{n'\}$ is a
subsequence of $\{n\}$ and $U^{\lambda}_{n'}R_{n'}f$ converges
uniformly to $H$ on compact sets. Therefore,
\begin{align*}
& \sum_{x,y\in\mathcal{S}_{n'},N^{-1}\leq |y-x|\leq
N}(u_{n'}(y)-u_{n'}(x))(g(y)-g(x))C_{n'}(x,y)\,(n')^{-1-d}\\
&\rightarrow\int\int_{N^{-1}\leq |y-x|\leq N}
(H(y)-H(x))(g(y)-g(x))J(x,y)\, m(dy)\, dx.
\end{align*}
Combining these estimates gives $\lim\limits_{n'\to\infty}\mathcal{E}^{n'}(u_{n'},g)=\mathcal{E}(H,g)$.

\medskip \noindent
\textbf{Step 4} \quad We show that $\mathcal{E}(H,g)=(f,g)-\lambda(H,g)$ for all
$g\in\mathcal{F}$ and $H=U^{\lambda}f$.

From the above three steps,
\[
\mathcal{E}(H,g)=\lim_{n'\to\infty}\mathcal{E}^{n'}(u_{n'},g)=\lim_{n'\to
\infty}(f,g)_{n'}-\lambda(u_{n'},g)_{n'}=(f,g)-\lambda(H,g)
\]
for all $g\in C^1_0(\R^d)$. Note that $C^1_0(\R^d)$ is dense in
$\mathcal{F}$ with respect to the norm
$(\mathcal{E}(\cdot,\cdot)+\|\cdot\|^2_2)^{1/2}$, see Theorem 3.9 in \cite{xu}.
\[
\mathcal{E}(H,g)=(f,g)-\lambda(H,g),\quad\text{for
all}\;g\in\mathcal{F}.
\]
This implies that $H$ is the $\lambda$-resolvent of $f$ for the
process corresponding to the Dirichlet form
$(\mathcal{E},\mathcal{F})$, that is, $H=U^{\lambda}f$. According to
what we have obtained so far, we know that every subsequence of
$\{U^{\lambda}_nR_nf\}$ has a convergent further subsequence with limit
$U^{\lambda}f$. Therefore, the whole sequence $U^{\lambda}_nR_nf$
converges to $U^{\lambda}f$ whenever $f\in C_0(\R^d)$, that is,
$\lim\limits_{n\to\infty}U^{\lambda}_nR_nf=U^{\lambda}f$ for $f\in
C_0(\R^d)$.

\medskip \noindent
\textbf{Step 5} \quad For each $f\in C_0(\R^d)$, we show $\lim\limits_{n\to\infty}\P^n_t
R_n f=\P_t f$.

Using the same argument as in \textbf{Step 1}, we see that any sequence of
$\{\P^{n}_tR_nf\}$ has a uniformly convergent subsequence whenever
$f\in C_0(\R^d)$. Suppose we have a subsequence $\{n'\}$ such that
$\lim\limits_{n'\to\infty}\P^{n'}_t R_n f$ exists. Note that
\[
U^{\lambda}_n R_nf=\int^{\infty}_0e^{-\lambda
t}\,\P^n_tR_nf\, dt\quad\text{and}\quad
U^{\lambda}f=\int^{\infty}_0e^{-\lambda t}\,\P_tf\,dt.
\]
Using the uniqueness of Laplace transform and the fact
$\lim\limits_{n\to\infty}U^{\lambda}_nR_nf=U^{\lambda}f$ for $f\in
C_0(\R^d)$, we obtain that the whole sequence $\{\P^{n}_t R_n
f\}$ converges to $\P_t f$ whenever $f\in C_0(\R^d)$.

\medskip \noindent
\textbf{Step 6} \quad We show the weak convergence of the $\P^{[x]_n}$-laws of
$\{Y^n_t; 0\leq t\leq t_0\}$ for each $t_0>0$.

It suffices to show the tightness of $\{Y^{n}_t; 0\leq t\leq t_0\}$
in the space $D[0,t_0]$ and that finite-dimensional distributions of
$\{Y^n_t; 0\leq t\leq t_0\}$ converge to those of $\{X_t; 0\leq
t\leq t_0\}$. Let $\tau_n$ be stopping times bounded by $t_0$ and
$\{\delta_n\}$ a sequence of positive real numbers converging to
0. Then, by Theorem \ref{exit1} and the strong Markov property,
\[
\P^{[x]_n}\big(|Y^n_{\tau_n+\delta_n}-Y^n_{\tau_n}|>a\big)
= \P^{[x]_n}\big(|Y^n_{\delta_n}-Y^n_0|>a\big)\leq \P^{[x]_n}\big(\tau_{([x]_n,A)}(Y^n)<\gamma(a,b)\big)\leq b
\]
for all $n$ large enough such that $\delta_n\leq \gamma(a,b)$.
Moreover, $[x]_n\to x$ implies the tightness of the starting
distributions and Theorem \ref{exit1} implies the tightness of
$\max_{t\in[0,t_0]}|Y^n_t-Y^n_{t-}|$ both under $\P^{[x]_n}$. By
Theorem 1 in \cite{aldous}, we have the tightness of the
$\P^{[x]_n}$-laws of $\{Y^n_t; 0\leq t\leq t_0\}$. Suppose $f\in
C_0(\R^d)$. Then, for each $t\in[0,t_0]$,
\[
\E^xf(X_s)=\P_tf=\lim_{n\to\infty}\P^{n}_tR_{n}f=\lim_{n\to\infty}\E^{[x]_{n}}R_{n}f(Y^n_t).
\]
Thus the one-dimensional distributions of $\{Y^n_t; 0\leq t\leq
t_0\}$ converge to those of $\{X_t; 0\leq t\leq t_0\}$. Similarly,
we can prove the finite-dimensional case using the Markov property and the time-homogeneity of $Y^n$ and the result in \textbf{Step 5}.
\end{proof}

\end{document}